\newcommand{\diam}{\mathop{\operator@font diam}}
\begin{document}

\def\s{\subseteq}
\def\h{\widehat}
\def\v{\varphi}
\def\t{\widetilde}
\def\ov{\overline}
\def\L{\Lambda}
\def\l{\lambda}
\def\O{\Omega}
\def\H{I\!\! H}
\def\a{\approx}
\def\k{\widetilde}
\def\la{\lambda}
\def\d{\delta}
\def\L{\Lambda}
\def\O{\Omega}
\def\r{\rho}
\def\ov{\overline}
\def\un{\underline}
\newcommand{\cT}{\mathcal{T}}
\newcommand{\cR}{\mathcal{R}}
\newcommand{\cL}{\mathcal{L}}
\newcommand{\cU}{\mathcal{U}}
\newcommand{\cV}{\mathcal{V}}
\newcommand{\cW}{\mathcal{W}}
\newcommand{\cM}{\mathcal{M}}
\newcommand{\cN}{\mathcal{N}}
\newcommand{\cS}{\mathcal{S}}
\newcommand{\cP}{\mathcal{P}}
\newcommand{\cH}{\mathcal{H}}
\newcommand{\FR}{{{}^\bullet}\mathbb{R}}
\newcommand{\triangleL}{\triangleleft_{\mathcal{L}}}
\newcommand{\triangleR}{\triangleright_{\mathcal{R}}}
\newcommand{\trangleR}{\triangleleft_{\mathcal{R}}}
\newcommand{\trangleL}{\triangleright_{\mathcal{L}}}
\newcommand{\triangleH}{\triangleleft_{\mathcal{H}}}

\newcommand{\triangleLq}{\trianglelefteq_{\mathcal{L}}}
\newcommand{\triangleRq}{\trianglelefteq_{\mathcal{R}}}
\newcommand{\trangleRq}{\trianglerighteq_{\mathcal{R}}}

\title{{\bf A Few Questions on the Topologization of the Ring of Fermat Reals.}}

\author{Kyriakos Papadopoulos, kyriakos.papadopoulos1981@gmail.com\\
\small{American University of the Middle East, Kuwait,}}

\date{}
\maketitle

\begin{abstract}
This work is developing, and we will include many additions in the
near future. Our purpose here is to highlight that there is plenty
of space for a topological development of the Fermat Real
Line.
\end{abstract}

\medskip
\noindent {\bf Keywords:} 
Ring of Fermat Reals, GO-space, LOTS, Nest, Interlocking

\medskip
\noindent
{\bf 2010 AMS Subject Classification.} 54C35

\newtheorem{definition}{Definition}[section]
\newtheorem{remark}{Remark}[section]
\newtheorem{remarks}{Remarks}[section]
\newtheorem{notation}{Notation}[section]
\newtheorem{examples}{Examples}[section]
\newtheorem{example}{Example}[section]
\newtheorem{theorem}{Theorem}[section]
\newtheorem{proposition}{Proposition}[section]
\newtheorem{co}{Corollary}[section]
\newtheorem{lemma}{Lemma}[section]
\newtheorem{corollary}{Corollary}[section]

\section{Introduction}

The idea of the ring of Fermat Reals $\FR$ has come as a possible alternative to Synthetic
Differential Geometry (see e.g. [11, 12, 13]) and its main aim is the development
of a new foundation of smooth differential geometry for finite and infinite-dimensional
spaces. In addition, $\FR$ could play a role of a potential alternative in some certain
 problems in the field ${}^\star \mathbb{R}$ in Nonstandard Analysis (NSA), because
 the applications of NSA in differential geometry are very few.
One of the ``weak'' points of $\FR$ at the moment is
the lack of a natural topology, carrying the strong topological
properties of the line.

P. Giordano and M. Kunzinger have recently done brave steps
towards the topologization of the ring ${}^\bullet \mathbb{R}$ of
Fermat Reals. In particular, they have constructed two topologies;
the Fermat topology and the omega topology (see \cite{ref3}). The Fermat topology is generated
by a complete pseudo-metric and is linked to the differentiation of non-standard
smooth functions. The omega topology is generated by a complete metric and
is linked to the differentiation of smooth functions on infinitesimals.
Although both topologies are very useful in developing infinitesimal
instruments for smooth differential geometry, none of these
two topologies aims to characterize the Fermat real line from an
order-theoretic perspective. In fact, neither makes the space $T_1$, while
an appropriate order-topology would equip the Fermat Real Line
with the structure of a monotonically normal space, at least. The
possibility to define a linear order relation on $\FR$, so that it
can be viewed as a LOTS (linearly ordered topological space) can
be considered important, because $\FR$ is an alternative mathematical
model of the real line, having some features with respect to
applications in smooth differential geometry and mathematical physics.
It is therefore natural to ask whether for $\FR$ peculiar characteristics
of $\mathbb{R}$ hold or not.

In this paper we will focus in the order relation which is introduced
in \cite{ref4} (which is linear, but it generates the discrete topology
on the space and also if considered minus the diagonal, i.e. as a strict
order, the topology when restricted to the set of proper reals is again
the discrete topology) and we will add properties to it, so that it will both extend
the natural order of the real line and it will also give a stronger topology
than the Fermat topology and the omega topology. We aim to do this using
interlocking nests.

As we shall see in Definition \ref{definition-equivalens relation Fermat Real},
the idea of the formation of $\FR$ starts with an equivalence relation in
the little-oh polynomials, where $\FR$ is the quotient space under this
relation. This treatment permits us to view these little-oh polynomials as
numbers.

\section{Preliminaries.}
\subsection{LOTS and GO-spaces via Nests.}

The notions nest and order are closely related. J. van Dalen and E. Wattel gave
a complete characterization of LOTS (linearly ordered topological spaces) and of
GO-spaces (generalized ordered spaces, i.e. subspaces of LOTS) using properties of nests (see \cite{ref1}).
In this paper we will use tools from \cite{ref2}, where the authors improved the techniques
of van Dalen and Wattel in order to characterize ordinals in topological terms and from \cite{ref5}, where the author studies further
properties of order relations via nests.

\begin{definition}\label{definition-T0-T1}
Let $X$ be a set.
\begin{enumerate}
\item A collection $\mathcal{L}\subset \mathcal{P}(X)$ of subsets of $X$ $T_0$-{\em separates} $X$, if and only if for every $x,y \in X$, such that $x \neq y$, there exists $L \in
\mathcal{L}$, such that $x \in L$ and $y \notin L$ or $y \in L$
and $x \notin L$.

\item A collection $\mathcal{L} \subset \mathcal{P}(X)$ of subsets of $X$ $T_1$-{\em separates} $X$, if and only if for every $x,y \in X$, such that $x \neq y$, there exist $L,L' \in
\mathcal{L}$, such that $x \in L$ and $y \notin L$ and also $y \in
L'$ and $x \notin L'$.
\end{enumerate}
\end{definition}

\begin{definition}
Let $X$ be a set and let $\mathcal{L}$ be a family
of subsets of $X$. $\mathcal{L}$ is a {\em nest}
on $X$ if, for every $M, N \in \mathcal{L}$, either $M \subset N$ or $N \subset M$.
\end{definition}

\begin{definition}\label{definition-order-reflexive}
Let $X$ be a set and $x, y \in X$. We declare $x \triangleL y$,
if and only if there exists $L \in \cL$, such that $x \in L$
and $y \notin L$.
\end{definition}

It follows that $x \triangleLq y$, if and only if either $x \triangleL y$ or $x = y$. One can
easily see that $\triangleLq$ is a linear order, if $\cL$ is a $T_0$-separating nest.

\begin{theorem}[(See \cite{ref2})]\label{theorem - preliminary to Lemma for Theorem Dalen-Wattel}
Let $X$ be a set and suppose that $\cL$ and $\cR$ are two nests on $X$. Then,
$\cL \cup \cR$ is $T_1$-separating, if and only if $\cL$ and $\cR$ are
both $T_0$-separating and $\triangleL = \triangleR$.
\end{theorem}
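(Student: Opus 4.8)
The plan is to derive both implications from one structural property of nests, which here plays the role of antisymmetry: if $\cN$ is a nest on $X$ and $a,b\in X$, then one cannot simultaneously have $N\in\cN$ with $a\in N$, $b\notin N$ and $N'\in\cN$ with $b\in N'$, $a\notin N'$, since the nest condition forces $N\subseteq N'$ or $N'\subseteq N$ and either option contradicts one of the membership requirements. Equivalently, inside a single nest a pair of distinct points can be separated in at most one direction. I would state this as a preliminary observation and then invoke it repeatedly.

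For ($\Leftarrow$), assume $\cL$ and $\cR$ are $T_0$-separating and $\triangleL=\triangleR$, and fix $x\neq y$. Since $\cL$ is $T_0$-separating, $x\triangleL y$ or $y\triangleL x$; by the symmetry of $T_1$-separation in $x$ and $y$ I may assume $x\triangleL y$. The witness $L\in\cL$ is then a member of $\cL\cup\cR$ containing $x$ but not $y$. For a member containing $y$ but not $x$, push $x\triangleL y$ through $\triangleL=\triangleR$ to obtain $x\triangleR y$; unwinding the definition of the order induced by $\cR$, this yields $R\in\cR$ with $y\in R$ and $x\notin R$. Hence $\cL\cup\cR$ $T_1$-separates $x$ and $y$.

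For ($\Rightarrow$), assume $\cL\cup\cR$ is $T_1$-separating. To see $\cL$ is $T_0$-separating, fix $x\neq y$ and suppose no member of $\cL$ separates them; then both $T_1$-witnesses --- one containing $x$ not $y$, one containing $y$ not $x$ --- lie in $\cR$, contradicting the preliminary observation applied to the nest $\cR$. So $\cL$, and symmetrically $\cR$, is $T_0$-separating. For the equality $\triangleL=\triangleR$: if $x\triangleL y$ with witness $L\in\cL$, take a $T_1$-witness $S'$ with $y\in S'$, $x\notin S'$; were $S'\in\cL$ the pair $L,S'$ would cross inside $\cL$, which is impossible, so $S'\in\cR$, and this is exactly $x\triangleR y$. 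The reverse inclusion $\triangleR\subseteq\triangleL$ follows by the same argument with the roles of $\cL$ and $\cR$ interchanged, starting instead from the $T_1$-witness containing $x$ but not $y$. This gives $\triangleL=\triangleR$ and completes the proof.

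I expect the weight of the argument to sit in the ($\Rightarrow$) direction, and specifically in the use of the nest/crossing observation to exclude the pathological case where both $T_1$-separating sets come from the same nest. Once that is in place, the rest is bookkeeping: tracking which of $\cL$, $\cR$ a given witness belongs to, and the routine left/right matching of $\triangleL$ with $\triangleR$. No intricate construction or estimate is needed.
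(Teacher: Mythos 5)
Your proof is correct; the paper states this theorem as a quoted result from \cite{ref2} without reproducing its proof, and your argument --- built entirely on the observation that a single nest can separate a given pair of points in at most one direction, hence can never $T_1$-separate a pair by itself --- is exactly the standard argument from that reference. The one delicate point, which you handle correctly, is reading $x \triangleR y$ as the existence of $R \in \cR$ containing $y$ but not $x$ (i.e.\ the reverse of $\triangleleft_{\cR}$); with that convention both implications go through as you describe.
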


\begin{definition}[van Dalen \& Wattel]\label{definiton-interlocking}
Let $X$ be a set and let $\cL \subset \cP(X)$. We say that $\cL$ is interlocking
if and only if, for each $L \in \cL$, $L = \bigcap\{N \in \cL : L \subsetneq N\}$
implies $L = \bigcup \{N \in \cL : N \subsetneq L\}$.
\end{definition}

\begin{theorem}[See \cite{ref2}]\label{theorem-interlocking}
Let $X$ be a set and let $\cL$ be a $T_0$-separating nest on $X$. The following are equivalent:
\begin{enumerate}

\item $\cL$ is interlocking;

\item for each $L \in \cL$, if $L$ has a $\triangleL$-maximal element, then $X-L$ has a $\triangleL$-minimal element;


\end{enumerate}
\end{theorem}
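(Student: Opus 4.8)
The plan is to fix $L \in \cL$ and describe everything in terms of the two families $\cL_L^- := \{N \in \cL : N \subsetneq L\}$ and $\cL_L^+ := \{N \in \cL : L \subsetneq N\}$ of members of the nest lying strictly below and strictly above $L$. Since a nest is totally ordered by inclusion, $\cL = \cL_L^- \cup \{L\} \cup \cL_L^+$ (a partition), and I will read $\bigcap \emptyset = X$ inside $\cP(X)$ so that $\bigcap \cL_L^+$ always makes sense (this is already the reading forced by Definition~\ref{definiton-interlocking}). The point of the setup is to make condition (1) come out as a literal contrapositive of condition (2).

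The first step is a translation lemma for the left half of (2): \emph{$L$ has a $\triangleL$-maximal element if and only if $\bigcup \cL_L^- \subsetneq L$}, equivalently (the union is always contained in $L$) $L \neq \bigcup \{N \in \cL : N \subsetneq L\}$. For one direction, if $m \in L$ is $\triangleL$-maximal then $m$ can belong to no $N \in \cL_L^-$, since otherwise any $z \in L \setminus N$ gives $m \triangleL z$ with $z \in L$; hence $m \notin \bigcup \cL_L^-$ and this union is a proper subset of $L$. Conversely, any $m \in L \setminus \bigcup \cL_L^-$ is $\triangleL$-maximal in $L$: a set $N$ witnessing $m \triangleL y$ for some $y \in L$ would satisfy $L \not\subseteq N$, hence lie strictly below $L$ by the nest property, hence be a member of $\cL_L^-$ containing $m$ --- impossible.

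The second step is the mirror-image lemma for the right half of (2): \emph{$X - L$ has a $\triangleL$-minimal element if and only if $L \subsetneq \bigcap \cL_L^+$}, equivalently (the intersection always contains $L$) $L \neq \bigcap \{N \in \cL : L \subsetneq N\}$. A $\triangleL$-minimal $m' \in X - L$ must lie in every $N \in \cL_L^+$, since otherwise some $z \in N \setminus L$ satisfies $z \triangleL m'$ with $z \in X - L$; thus $m' \in \bigcap \cL_L^+ \setminus L$. Conversely, any $m' \in \bigcap \cL_L^+ \setminus L$ is $\triangleL$-minimal in $X - L$: a set $N$ witnessing $y \triangleL m'$ for some $y \in X - L$ would satisfy $N \not\subseteq L$, hence lie strictly above $L$, hence contain $\bigcap \cL_L^+$ and in particular $m'$ --- impossible.

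Finally I would assemble the pieces. Spelled out, condition (1) says that for every $L \in \cL$, $L = \bigcap \cL_L^+$ implies $L = \bigcup \cL_L^-$; contraposing the inner implication, this is the statement that for every $L \in \cL$, $L \neq \bigcup \cL_L^-$ implies $L \neq \bigcap \cL_L^+$, which by the two translation lemmas is exactly condition (2). The one point I expect to need care is the repeated move, inside both lemmas, of using the nest property to force an arbitrary witnessing set to lie strictly below (respectively, strictly above) $L$ --- this is where ``nest'' is genuinely used, and it is the crux of the argument; everything else is unwinding the definition of $\triangleL$. I note in passing that $T_0$-separation is not actually needed for this equivalence: its role in the surrounding theory is only to make $\triangleLq$ linear, so that ``$\triangleL$-maximal/minimal'' may be read as ``greatest/least''.
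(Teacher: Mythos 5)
Your proof is correct, and since the paper only cites \cite{ref2} for this result, the relevant comparison is with the standard argument there, which proceeds by exactly your two translations: ``$L$ has a $\triangleL$-maximal element'' $\Leftrightarrow$ $L \neq \bigcup\{N \in \cL : N \subsetneq L\}$ and ``$X-L$ has a $\triangleL$-minimal element'' $\Leftrightarrow$ $L \neq \bigcap\{N \in \cL : L \subsetneq N\}$, followed by contraposition of the inner implication in the definition of interlocking. Your side remark that $T_0$-separation is not needed for this particular equivalence (only for linearity of $\triangleLq$) is also accurate.
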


\begin{theorem}[van Dalen \& Wattel]\label{theorem - them Dalen-Wattel}
Let $(X,\mathcal{T})$ be a topological space. Then:
\begin{enumerate}

\item If $\mathcal{L}$ and $\mathcal{R}$ are two nests of open
sets, whose union is $T_1$-separating, then every $\triangleleft_{\mathcal{L}}$-order
open set is open, in $X$.

\item $X$ is a GO-space, if and only if there are two nests $\mathcal{L}$ and $\mathcal{R}$
of open sets, whose union is $T_1$-separating and forms a subbasis for $\cT$.

\item $X$ is a LOTS, if and only if there are two interlocking nests $\cL$ and $\cR$
of open sets, whose union is $T_1$-separating and forms a subbasis for $\cT$.

\end{enumerate}
\end{theorem}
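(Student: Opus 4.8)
Throughout put $<\,:=\,\triangleL$, with reflexive closure $\triangleLq$. Since $\cL\cup\cR$ is $T_1$-separating and $\cL,\cR$ are nests, Theorem~\ref{theorem - preliminary to Lemma for Theorem Dalen-Wattel} gives that $\cL$ and $\cR$ are $T_0$-separating and $\triangleL=\triangleR$; hence, by the remark following Definition~\ref{definition-order-reflexive}, $\triangleLq$ is a linear order on $X$. The first thing I would record is the structural fact that \emph{every $L\in\cL$ is an initial segment and every $R\in\cR$ is a final segment of $(X,<)$}: if $x\in L$ and $y<x$, choose $L'\in\cL$ witnessing $y\triangleL x$; as $\cL$ is a nest and $x\in L\setminus L'$, necessarily $L'\subseteq L$, so $y\in L$; the final-segment claim is the mirror statement, with $\triangleL$ rewritten as $\triangleR$ via Theorem~\ref{theorem - preliminary to Lemma for Theorem Dalen-Wattel}. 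For Part~(1), the order topology $\tau_<$ has as a subbasis the rays $(\leftarrow,a)=\{x:x<a\}$ and $(a,\rightarrow)=\{x:a<x\}$, $a\in X$, and I would rewrite these by means of the nests: $x<a$ iff some $L\in\cL$ contains $x$ but not $a$, so $(\leftarrow,a)=\bigcup\{L\in\cL:a\notin L\}$, a union of open sets; dually $a<x$ iff $x\triangleleft_{\cR}a$, so $(a,\rightarrow)=\bigcup\{R\in\cR:a\notin R\}$, again open. Hence every $\triangleL$-order open set is open in $X$.

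For the $(\Leftarrow)$ direction of Part~(2): by Part~(1), $\tau_<\subseteq\cT$; and since the subbasis $\cL\cup\cR$ consists of initial and final segments, every basic open set of $\cT$ is a finite intersection of such, hence $<$-convex, so $\cT$ has a base of $<$-convex open sets. A space $(X,\cT)$ carrying a linear order with $\tau_<\subseteq\cT$ and a base of $<$-convex open sets embeds as a subspace of a LOTS, via the usual doubling construction discussed below; thus $X$ is a GO-space. For $(\Rightarrow)$: if $X$ is a subspace of a LOTS $(Y,<_Y)$, restrict $<_Y$ to $X$ and set $\cL=\{(\leftarrow,y)_Y\cap X:y\in Y\}$, $\cR=\{(y,\rightarrow)_Y\cap X:y\in Y\}$. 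Each is a nest of open subsets of $X$ whose union is the restriction of the standard subbasis of $Y$, hence a subbasis for $\cT$; and $T_1$-separation holds since for $x_1<_Y x_2$ in $X$, the set $(\leftarrow,x_2)_Y\cap X$ contains $x_1$ but not $x_2$ while $(x_1,\rightarrow)_Y\cap X$ contains $x_2$ but not $x_1$.

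For Part~(3), the extra hypothesis is that $\cL$ and $\cR$ are interlocking, and its role is to promote the inclusion $\tau_<\subseteq\cT$ of Part~(1) to an equality. Using the structural fact, an initial segment $L\in\cL$ that is not $\tau_<$-open must have a $<$-maximum while $X\setminus L$ has no $<$-minimum; Theorem~\ref{theorem-interlocking} says interlocking of $\cL$ excludes this, so every $L\in\cL$ is $\tau_<$-open, and dually every $R\in\cR$ is $\tau_<$-open (apply Theorem~\ref{theorem-interlocking} to $\cR$, whose induced order is the reverse of $<$). Hence $\cL\cup\cR\subseteq\tau_<$, so $\cT\subseteq\tau_<$, and with Part~(1) we get $\cT=\tau_<$: $X$ is a LOTS. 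Conversely, if $X$ is a LOTS with order $<$, take $\cL=\{(\leftarrow,a):a\in X\}$ and $\cR=\{(a,\rightarrow):a\in X\}$; these are nests of open sets, their union is the standard subbasis and is $T_1$-separating, and each member satisfies the interlocking criterion of Theorem~\ref{theorem-interlocking} trivially, since $X\setminus(\leftarrow,a)=[a,\rightarrow)$ always has $<$-minimum $a$ and $X\setminus(a,\rightarrow)=(\leftarrow,a]$ always has $<$-maximum $a$.

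The step I expect to be the real obstacle is the $(\Leftarrow)$ half of Part~(2): manufacturing an honest LOTS superspace out of the data ``$\tau_<\subseteq\cT$, base of $<$-convex open sets''. This is the Dedekind-style doubling: one replaces $X$ by the linearly ordered set obtained by splitting each point that carries only a one-sided $\cT$-neighbourhood into a left- and a right-copy, orders the result lexicographically, and checks that the natural injection of $X$ is a homeomorphism onto a subspace. That construction is the only genuinely non-formal ingredient; everything else above is bookkeeping with nests together with Theorems~\ref{theorem - preliminary to Lemma for Theorem Dalen-Wattel}, \ref{theorem-interlocking} and Part~(1).
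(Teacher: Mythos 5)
The paper gives no proof of this theorem---it is quoted from van Dalen and Wattel \cite{ref1} and from \cite{ref2}---and your sketch correctly reconstructs the standard argument from those sources: rays of $\triangleLq$ are unions of nest elements (Part 1), convexity of finite intersections of initial and final segments plus the Lutzer-style doubling give the GO-space equivalence (Part 2), and Theorem~\ref{theorem-interlocking} upgrades $\tau_<\subseteq\cT$ to equality (Part 3). The only step you leave genuinely unproved is the one you flag yourself, namely that ``order topology coarser than $\cT$ plus a base of convex open sets'' yields an embedding into a LOTS; that is a standard cited construction, so the plan is sound as written.
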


\subsection{The Ring ${}^\bullet \mathbb{R}$ of Fermat Reals.}

The material in this subsection can be found in \cite{ref7}, \cite{ref6} and also in \cite{ref4}.

\begin{definition}
A little-oh polynomial $x_t$ (or $x(t)$) is an ordinary set-theoretical function, defined as follows:
\begin{enumerate}

\item $x : \mathbb{R}_{\ge 0} \to \mathbb{R}$ and

\item $x_t = r + \sum_{i=1}^k \alpha_i t^{a_i} + o(t)$, as $t \to 0^+$, for
suitable $k \in \mathbb{N}$, $r,\alpha_1,\cdots,\alpha_k \in \mathbb{R}$ and $a_1,\cdots,a_k \in \mathbb{R}_{\ge 0}$.
\end{enumerate}
\end{definition}
The set of all little-oh polynomials is denoted by the symbol $\mathbb{R}_o[t]$. So, $x \in \mathbb{R}_o(t)$,
if and only if $x$ is a polynomial function with real coefficients, of a real variable $t \ge 0$, with
generic positive powers of $t$ and up to a little-oh function $o(t)$, as $t \to 0^+$.

\begin{definition}\label{definition-equivalens relation Fermat Real}
Let $x,y \in \mathbb{R}_o[t]$. We declare $x\sim y$ (and we say $x=y$ in ${}^\bullet \mathbb{R}$), if and only
if $x(t) = y(t) + o(t)$, as $t \to 0^+$.
\end{definition}

The relation $\sim$ in Definition \ref{definition-equivalens relation Fermat Real} is an equivalence
relation and ${}^\bullet \mathbb{R} :=\mathbb{R}_o[t]/\sim$.

A first attempt to define an order in ${}^\bullet \mathbb{R}$ has come from Giordano.

\begin{definition}[Giordano]\label{definition-order Giordano}
Let $x,y \in {}^\bullet \mathbb{R}$. We declare $x \le y$, if and only if there exists $z \in {}^\bullet \mathbb{R}$,
such that $z = 0$ in ${}^\bullet \mathbb{R}$ (i.e. $\lim_{t \to 0^+} z_t/t = 0$) and for
every $t \ge 0$ sufficiently small, $x_t \le y_t + z_t$.
\end{definition}

For simplicity, one does not use equivalence relation but works with an equivalent language of
representatives. If one chooses to use the notations of \cite{ref4}, one has to note that
Definition \ref{definition-order Giordano} does not depend on representatives.

As the author describes in \cite{ref4}, the order relation in NSA admits all formal properties
among all the theories of (actual) infinitesimals, but there is no good dialectic of these
properties with their informal interpretation. In particular, the order in ${}^\star \mathbb{R}$
inherits by transfer all the first order properties but, on the other hand, in the quotient
field ${}^\star \mathbb{R}$ it is difficult to interpret these properties of the order
relation as intuitive properties of the corresponding representatives. For example, a geometrical
interpretation like that of $\FR$ seems not possible for ${}^ \star \mathbb{R}$. Definition \ref{definition-order Giordano}
provides a clear geometrical representation of the ring $\FR$ (see, for instance, section 4.4 of \cite{ref4}).

\subsection{The Fermat Topology and the omega-topology on $\FR$.}

A subset $A \subset \FR^n$ is open in the Fermat topology, if it can be written as
$A = \bigcup \{{}^\bullet U \subset A: U \textrm{ is open in the natural topology in }\mathbb{R}^n\}$.
Giordano and Kunzinger describe this topology as the best possible one for sets having a ``sufficient
amount of standard points'', for example ${}^\bullet U$. They add that this connection between the
Fermat topology and standard reals can be glimpsed by saying that the monad $\mu(r) := \{x \in \FR : \textrm{ standard part of }x = r\}$ of a real $r \in \mathbb{R}$
is the set of all points which are limits of sequences with respect to the Fermat topology. However it is obvious that in sets of infinitesimals there is a need for constructing a (pseudo-)metric generating a finer topology that the authors call the omega-topology (see \cite{ref3}).
Since neither the Fermat nor the omega-topology are Hausdorff when restricted to $\FR$ and since each
of them describes sets having a ``sufficient amount'' of standard points or infinitesimals, respectively,
there is a need for defining a natural topology on $\FR$ describing sufficiently all Fermat reals
and carrying the best possible properties.

\section{Interlocking Nests on ${}^\bullet \mathbb{R}$.}

A first disadvantage of the construction in Definition \ref{definition-order Giordano}
is that the order $\le$ in $\FR$ does not generate interlocking nests, missing
points from the Fermat real line. In particular, the nest $\cL$ consisting of sets $L=\{k \in \FR : k \le l\}$, for some $l \in \FR$,
has as maximal element the fermat real $l$, but the complement of $L$, i.e. $L^c = \{k \in \FR : k> l\}$, for some $l \in \FR$,
does not have a minimal element. Thus, we first remark that the order of Definition \ref{definition-order Giordano} makes ${}^\bullet \mathbb{R}$ a GO-space, a subspace of a particular LOTS. So we will now have to construct an appropriate order in ${}^\bullet \mathbb{R}$ which makes it LOTS, by completing the missing minimal elements from complements of sets with maximal elements. Even the fact that $\le$ is
linear, it generates the discrete topology on $\FR$ and, if considered as a strict order, the restriction of
its topology in $\mathbb{R}$ will be again the discrete topology.

\subsection{Order Relations and an Order Topologies on $\FR$.}

\begin{theorem}
The pair $({}^\bullet \mathbb{R},<_F)$, where $<_F$ is defined as follows:

\[
x<_Fy \Leftrightarrow \begin{cases}
\exists\,\{k \in \FR : k \le l\}, \textrm{ some } l \in \FR, \textrm{ such that } x \in \{k \in \FR : k \le l\} \not\ni y, l \in \FR
 \\
or\\
x = \max\{k \in \FR : k \le l\}, \textrm{ some } l \in \FR \textrm{ and } \exists h \in \FR : h>0,\,y = x + h\\
or\\
y = \min \{k \in \FR : l \le k\}, \textrm{ some } l \in \FR \textrm{ and } \exists h \in \FR : h>0,\,x = y - h
\end{cases}
\]
where $x,y$ are distinct Fermat reals, is a linearly ordered set.
\end{theorem}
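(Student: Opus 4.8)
The plan is to check the three axioms of a strict linear order for $<_F$ on $\FR$ --- irreflexivity, transitivity and comparability --- and the engine for all of them will be the observation that $<_F$ coincides with the strict part of Giordano's order $\le$ of Definition~\ref{definition-order Giordano}, which I will treat as a linear order on $\FR$. For the latter I would first record, as a short preliminary lemma, that $\le$ is reflexive, antisymmetric and total on $\FR$: reflexivity is witnessed by $z=0$; for antisymmetry and totality one passes to little-oh representatives and uses that $x_t-y_t$ either keeps a fixed sign for all small $t$ (giving comparability, hence totality) or is $o(t)$, in which case $x=y$ in $\FR$ (giving antisymmetry). This lemma is what lets the reduction go through.

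Next I would unwind the three clauses of the definition against $\le$. Clause~1 asserts $x\le l$ and $y\notin\{k:k\le l\}$ for some $l$; taking $l=x$ and using reflexivity shows that $x<y$ in Giordano's order implies clause~1, while conversely clause~1 together with totality and antisymmetry of $\le$ forces $x\le l<y$, hence $x<y$. For clauses~2 and~3 the key point is that, by reflexivity, $\max\{k:k\le l\}=l$ for every $l\in\FR$, so clause~2 collapses to the single condition ``there is $h\in\FR$ with $h>0$ and $y=x+h$'' (and clause~3 to the symmetric one). It then remains to see that $h>0$ forces $x+h>x$: passing to representatives, $x+h\le x$ would require some $z$ with $\lim_{t\to0^+}z_t/t=0$ and $h_t\le z_t$ for all small $t$, i.e.\ $h_t=o(t)$, i.e.\ $h=0$ in $\FR$ --- contradicting $h>0$; since $\le$ is total, this gives $x<x+h$, so clauses~2 and~3 again yield $x<y$.

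Combining the two directions, $<_F$ is exactly the strict part of $\le$ on $\FR$. Irreflexivity is then immediate --- and is in any case built into the hypothesis that $x$ and $y$ are distinct Fermat reals --- while transitivity and comparability are inherited directly from the corresponding properties of $\le$. The step I expect to be the real content, rather than bookkeeping, is the representative-level computation establishing compatibility of $\le$ with addition (the implication $h>0\Rightarrow x+h>x$), since this is precisely what makes clauses~2 and~3 collapse onto clause~1 and thereby makes the definition well posed; a secondary point to be careful about is the reading of ``$\max$'', ``$\min$'' and ``$h>0$'' in the statement, all of which I would understand with respect to $\le$, noting that a strict-order reading of $\max$ and $\min$ would change the behaviour of the clauses. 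If one prefers to avoid the identification $<_F={<}$, the same three computations can be reorganized into direct verifications of irreflexivity, transitivity and comparability, but routing through $\le$ is the shorter path.
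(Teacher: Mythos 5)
Your proof is correct for the theorem as displayed, but it takes a genuinely different route from the paper's, and the comparison is instructive. The paper never checks the three clauses directly: it starts from the two nests $\cL$ and $\cR$ of $\le$-lower and $\le$-upper sets, invokes Theorem~\ref{theorem-interlocking} to note that they fail to be interlocking (complements of sets with $\le$-maximal elements have no $\le$-minimal element), and then \emph{constructs} $<_F$ by choosing for each $L\in\cL$ a designated point $x_L^+$ (ultimately taken to be $x_L+h$ with $h>0$ not zero in $\FR$), defining a retraction $p$ collapsing $x_L^+$ to $x_L$, and declaring $x<_F y$ iff $p(x)\triangleL p(y)$ or one of two immediate-successor/predecessor clauses holds; linearity is then asserted as obvious. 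You instead prove the preliminary lemma that Giordano's $\le$ is a total order on $\FR$ (the leading-term argument on representatives, which is indeed where the real content sits) and reduce each displayed clause to the strict part of $\le$: clause~1 via reflexivity and antisymmetry, clauses~2 and~3 via $\max\{k:k\le l\}=l$ and the compatibility $h>0\Rightarrow x+h>x$. This is a complete and self-contained proof of the stated theorem, and cleaner than the paper's. What your route additionally buys is a diagnosis the paper does not make: since all three clauses collapse onto the strict part of $\le$, the displayed $<_F$ coincides with $<$ and therefore cannot by itself supply the immediate successors that the paper's proof is trying to adjoin to make the nests interlocking; conversely, the relation actually built in the paper's proof (via $p$ and the chosen points $x_L^+\in\FR$) does not obviously agree with the displayed definition. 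Two small points of care in your write-up: the step ``$h_t\le z_t$, i.e.\ $h_t=o(t)$'' needs the companion lower bound coming from $h>0$ (i.e.\ $-h_t\le w_t$ with $w=o(t)$) before you can conclude $h=0$; and it is worth stating explicitly that transitivity of $\le$ follows from the sum of two $o(t)$ witnesses being $o(t)$, since comparability and transitivity of $<_F$ are both inherited from there.
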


\begin{proof}
The order of Definition \ref{definition-order Giordano} gives two nests, namely
the nest $\cL$, which consists of all sets $L = \{k \in \FR: k \le l\}$, some $l \in \FR$ and the nest $\cR$,
which consists of all sets $R = \{k \in \FR : l \le k\}$, some $l \in \FR$.
In addition, Theorem \ref{theorem - preliminary to Lemma for Theorem Dalen-Wattel} implies that
$\triangleLq = \trianglerighteq_{\cR} = \le$.

We remark that, for any $L \in \cL$ (respectively for any $R \in \cR$), $L$ (resp. $R$) has a $\triangleLq$-maximal element (resp. $\triangleRq$-maximal element for $R$), such that $X-L$ has no $\triangleLq$-minimal element (resp. $X-R$ has no $\triangleRq$-minimal element). So,
according to Theorem \ref{theorem-interlocking}, neither $\cL$ nor $\cR$ are interlocking.

Now, for all $L=\{k \in \FR : k \le l\} \in \cL$, some $l \in \FR$, let $x_L$ denote the $\triangleLq$-maximal element of $L$ and for all $R=\{k \in \FR : l \le k\} \in \cR$, some $l \in \FR$
let $y_R$ denote the $\triangleLq$-minimal element of $R$.

Furthermore, for each $L \in \cL$ choose $x_L^+ \in \FR$  and
for each $R \in \cR$ choose $y_R^- \in \FR$, where $x_L^+$ and $y_R^-$ are distinct points in ${}^\bullet \mathbb{R}$,
and define a map $p : {}^\bullet \mathbb{R} \to {}^\bullet \mathbb{R}-(\{x_L^+ : L \in \cL\} \cup \{y_R^- : R \in \cR\})$, as follows:
\[
p(x)=\begin{cases}
x,&\text{if }x\in {}^\bullet \mathbb{R}-(\{x_L^+ : L \in \cL\} \cup \{y_R^- : R \in \cR\})\\
x_L,&\text{if }x=x_L^+\\
y_R,&\text{if }x=y_R^-
\end{cases}
\]

Now, define an order $<_F$ on ${}^\bullet \mathbb{R}$, so that:
\[
x<_Fy \Leftrightarrow \begin{cases}
p(x) \triangleL  p(y) \\
or\\
x=x_L \textrm{ and } y = x_L^+\\
or\\
x=y_R^- \textrm{ and } y = y_R
\end{cases}
\]

Obviously, $<_F$ is a linear order and the restriction
of $<_F$ to ${}^\bullet \mathbb{R}-(\{x_L^+ : L \in \cL\} \cup \{y_R^- : R \in \cR\})$ equals $\triangleLq$, the order in Definition \ref{definition-order Giordano}. In addition, we can set $x_L^+ = x_L + h$, where $h$ is not zero in $\FR$ and $h>0$, that is,
 $\lim_{t \to 0^+} h_t/t \neq 0$ and, respectively, we set $x_R^- = x_R-h$, and this completes the proof.
\end{proof}

\begin{theorem}
$\FR$ equipped with the order topology from $<_F$ is a LOTS.
\end{theorem}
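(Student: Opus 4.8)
\emph{Plan.} I would derive the statement from part~(3) of Theorem~\ref{theorem - them Dalen-Wattel}: it suffices to exhibit two interlocking nests $\cL_F,\cR_F$ of open sets whose union $T_1$-separates $\FR$ and forms a subbasis for the order topology $\tau_{<_F}$ induced by $<_F$. The natural choice is to let $\cL_F$ be the family of all open left $<_F$-rays $\{k\in\FR:k<_F l\}$ ($l\in\FR$), and $\cR_F$ the family of all open right $<_F$-rays $\{k\in\FR:l<_F k\}$ ($l\in\FR$). Since $<_F$ is a linear order by the preceding theorem, any two members of $\cL_F$ are comparable under $\subseteq$, and likewise for $\cR_F$, so both are nests of $\tau_{<_F}$-open sets, and $\cL_F\cup\cR_F$ is a subbasis for $\tau_{<_F}$ by the definition of the order topology.

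\emph{Separation hypotheses.} Given distinct $x,y\in\FR$, linearity of $<_F$ gives, say, $x<_F y$; then $x\in\{k:k<_F y\}\not\ni y$ and $y\in\{k:x<_F k\}\not\ni x$, so $\cL_F\cup\cR_F$ $T_1$-separates $\FR$ in the sense of Definition~\ref{definition-T0-T1}. Moreover, reading off Definition~\ref{definition-order-reflexive}, the order $\triangleL$ induced by $\cL=\cL_F$ is exactly $<_F$ (the ray $\{k:k<_F y\}$ witnesses $x\triangleL y$ whenever $x<_F y$), and symmetrically $\cR_F$ induces $<_F$ as well; in particular each of $\cL_F,\cR_F$ is $T_0$-separating. (Alternatively one may invoke Theorem~\ref{theorem - preliminary to Lemma for Theorem Dalen-Wattel} at this point.)

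\emph{Interlocking --- the only real step.} Here I would apply the criterion of Theorem~\ref{theorem-interlocking}. Writing $\le_F$ for the reflexive closure of $<_F$ and fixing $L=\{k:k<_F l\}\in\cL_F$, its complement $\FR-L=\{k:l\le_F k\}$ always has the $\triangleL$-minimal element $l$; so clause~(2) of Theorem~\ref{theorem-interlocking} holds for $\cL_F$ (its conclusion holding unconditionally), hence $\cL_F$ is interlocking, and the mirror-image computation gives the same for $\cR_F$. Theorem~\ref{theorem - them Dalen-Wattel}(3) now yields that $(\FR,\tau_{<_F})$ is a LOTS.

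\emph{Where the difficulty really lies.} The substance is all in the preceding theorem: inserting, for each $L\in\cL$, the point $x_L^{+}=x_L+h$ (with $h>0$ and $h\neq 0$ in $\FR$) as an immediate $<_F$-successor of the former maximum $x_L$, and dually the $y_R^{-}$, is exactly what supplies the ``missing minimal elements'' of the complements $L^{c}$ and repairs the interlocking failure of the nests $\cL,\cR$ built from the order of Definition~\ref{definition-order Giordano}. Granting that $<_F$ is a well-defined linear order on all of $\FR$ (distinctness of the inserted points, compatibility of the three clauses of its definition), the present statement is close to a formality, since a linearly ordered set equipped with its order topology is by definition a LOTS; the van Dalen--Wattel packaging above merely names the interlocking nests that witness it. The one thing I would still check carefully is that passing from the old induced orders on $\cL,\cR$ to $<_F$ does not disturb the subbasis requirement, i.e.\ that $\tau_{<_F}$ is generated precisely by the open $<_F$-rays $\cL_F\cup\cR_F$ --- immediate from the definition of the order topology, but worth stating explicitly.
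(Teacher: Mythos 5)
Your proof is correct, but it follows a genuinely different route from the paper's. You prove the statement as literally written, by feeding the two nests of open $<_F$-rays into Theorem~\ref{theorem - them Dalen-Wattel}(3): you check they are nests of open sets forming a subbasis for $\cT_{<_F}$, that their union $T_1$-separates, and --- the only nontrivial clause --- that each is interlocking, via criterion~(2) of Theorem~\ref{theorem-interlocking} (the complement of an open ray is a closed ray, which always has the required extreme point). You also correctly observe that, once $<_F$ is known to be a linear order, the statement is essentially definitional, and that the van Dalen--Wattel packaging merely names the witnessing nests; the paper itself only records this ray-nest verification later, in Remark~1 of Section~3.2. The paper's own proof of this theorem does something different and, in a sense, more substantive: it does not re-verify the van Dalen--Wattel conditions for $<_F$ at all, but instead shows that the original GO-space topology $\cT$ generated by $\cL\cup\cR$ (from Giordano's order $\le$) is recovered as the subspace topology that ${}^\bullet\mathbb{R}-(\{x_L^+ : L\in\cL\}\cup\{y_R^- : R\in\cR\})$ inherits from $\cT_{<_F}$, by writing each $L\in\cL$ as the trace of the order-open ray $\{x : x<_F x_L^+\}$. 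In other words, the paper is really proving that the LOTS constructed in the preceding theorem is a linearly ordered \emph{extension} of the GO-space one started with --- which is the point of the whole construction --- while you prove the headline claim directly. Both arguments are valid; yours buys a self-contained verification of the LOTS property, the paper's buys the compatibility of the new order topology with the old one, and a complete account arguably needs both.
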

\begin{proof}
We will now show that the topology $\cT$ on ${}^\bullet \mathbb{R}-(\{x_L^+ : L \in \cL\} \cup \{y_R^- : R \in \cR\})$ coincides with the
subspace topology on ${}^\bullet \mathbb{R}-(\{x_L^+ : L \in \cL\} \cup \{y_R^- : R \in \cR\})$ that is inherited from the $<_F$-order topology
on ${}^\bullet \mathbb{R}$.

But, since $\cL \cup \cR$ forms a subbasis for $\cT$, that consists of two nests, every
set in $\cT$ can be written as a union of sets of the form $L \cap R$, where
$L \in \cL$ and $R \in \cR$. It suffices therefore to show that every $L \in \cL$ and
$R \in \cR$ can be written as the intersection of an order-open set with ${}^\bullet \mathbb{R}-(\{x_L^+ : L \in \cL\} \cup \{y_R^- : R \in \cR\})$.
But this is always true, since if $L \in \cL$, with $\triangleLq$-maximal element $x_L$, then
$L = {}^\bullet \mathbb{R}-(\{x_L^+ : L \in \cL\} \cup \{y_R^- : R \in \cR\}) \cap \{ x \in \FR: x<_F x_L^+\}$.

The argument for $R \in \cR$ is similar, and this completes the proof.
\end{proof}

\subsection{Remarks.}
\begin{enumerate}
\item The order topology $\cT_{<_F}$ equals the topology $\cT_{\cL_{<_F} \cup \cR_{<_F}}$,
where $\cL_{<_F}=\{k \in \FR : k <_F l\}$, some $l \in \FR$ and $\cR_{<_F}=\{k \in \FR : l <_F k\}$, some $l \in \FR$.
This is because $\cL_{<_F} \cup \cR_{<_F}$ $T_1$-separates $\FR$ and both $\cL_{<_F}$ and $\cR_{<_F}$ are interlocking nests. So, unlike
the GO-space topology $\cT_\le$ on $\FR$, where $\cT_\le \subset \cT_{\cL \cup \cR}$, $<_F$ provides a natural extension
of the natural linear order of the set of real numbers to the Fermat real line and the order topology from $<_F$ can
be completely described via the nests $\cL_{<_F}$ and $\cR_{<_F}$.

\item Viewing the Fermat real line as a LOTS and working with nests $\cL_{<_F}$ and $\cR_{<_F}$, one can now
define the product topology for $\FR^n$, some positive integer $n$, or even more generaly for $\Pi_{i \in I} \FR_i$, some arbitrary indexing
set $I$, in the usual way via the subbasis $\pi_{j_0}^{-1}(A_{j_0}) = \Pi_{i \in I}\{\FR_i : i \neq j_0\} \times A_{j_0}$,
where $A_{j_0}$ is an open subset in the coordinate space $\FR_{j_0}$ in the order topology $\cT_{<_F}$ and
$\pi_i : \Pi_{i \in I}\FR_i \to \FR_i$ the projection.

\item In this way one can define continuity for
any function $f$ from a topological space $Y$ into the product space $\Pi_{i \in I} \FR_i$ via
the continuity of $\pi_i \circ f :Y \to \FR_i$.

\item The neight of $\FR$ is $2$ and the neight of $\FR^n = n+1$ (see \cite{ref10}).
Using the product topology, as stated in Remark (2), we use four nests in order to
define -for example- the topology in $\FR^2$, but since the neight of $\FR^2$ is $3$,
one can define a topology using three nests exclusively.

\end{enumerate}

\subsection{Questions.}
\begin{enumerate}

\item As a LOTS, $(\FR,<_F)$ has rich topological properties. It is, for example,
a monotone normal space. It would be interesting though to have an extensive study on
the metrizability of this space. It is known that in a GO-space the terms
metrizable, developable, semistratifiable, etc. are equivalent (see \cite{ref8} and \cite{ref9}).
The real line (i.e. the set of all standard reals, from the point of view of $\FR$)
is a developable LOTS and this is equivalent to say that it is also a metrizable LOTS.
Is $(\FR,\cT_{<_F})$ developable?

\item Which of the subspaces of $(\FR,\cT_{<_F})$ are developable?
\end{enumerate}

Since any sequence $x_1,x_2,\cdots$ of points in $\Pi_{i \in I} \FR_i$ will converge to a point
$x \in \Pi_{i \in I}\FR_i$, iff for every projection $\pi_i: \Pi_{i \in I} \FR_i \to \FR_i$ the
sequence $\pi_i(x_1), \pi_i(x_2),\cdots$ converges to $\pi_i(x)$ in the coordinate space $\FR_i$,
any answer to the above questions will be foundamental towards our understanding of convergence in the ring
of Fermat Reals.

\newpage



\begin{thebibliography}{99}

\bibitem{ref1} J. van Dalen and E. Wattel, \emph{A Topological Characterization of Ordered Spaces},
General Topology and Appl., 3:347-354 (1973).

\bibitem{ref2} C. Good and K. Papadopoulos,
\emph{A Topological Characterization of Ordinals:
van Dalen and Wattel revisited}, Topology Appl., 159:1565-1572 (2012).

\bibitem{ref3} Paolo Giordano and Michael Kunzinger, \emph{Topological and Algebraic
Structures on the Ring of Fermat Reals}, Israel Journal of Mathematics, 193 (2013), 459-505.

\bibitem{ref4} Paolo Giordano, \emph{Fermat Reals: Nilpotent Infinitesimals and Infinite
Dimensional Spaces}, Prebook (http://arxiv.org/abs/0907.1872).

\bibitem{ref5} Kyriakos Papadopoulos, \emph{On the Orderability Problem and the Interval Topology}, chapter to appear in the Volume ``Topics in Mathematical Analysis and Applications'', in the Optimization and Its Applications Springer Series, T. Rassias and L. Toth Eds, Springer Verlag, 2014.

\bibitem{ref6} Giordano P., \emph{Fermat reals: infinitesimals without Logic}, Miskolc Mathematical Notes, Vol. 14 (2013), No. 3, pp. 65–80.

\bibitem{ref7} Giordano P., \emph{The ring of fermat reals}, Advances in Mathematics 225 (2010), pp. 2050-2075.

\bibitem{ref8} K.P. Hart, J.-I. Nagata, J.E. Vaughan, \emph{Encyclopedia of General Topology}, Elsevier Science and Technology Books (2014).

\bibitem{ref9} M.J. Faber, \emph{Metrizability in Generalized Ordered Spaces}, Matematisch Centrum, Amsterdam 1974.

\bibitem{ref10} W.R. Brian \emph{Neight: The Nested Weight of a Topological Space}, to appear in Topology Proceedings.

\bibitem{ref11} A. Kock, \emph{Synthetic Differential Geometry}, volume 51 of London Math, Soc. Lect. Note Series, Cambridge Univ. Press, 1981.

\bibitem{ref12} R. Lavendhomme, \emph{Basic Concepts of Synthetic Differential Geometry}, Kluwer Academic Publishers, Dordrecht, 1996.

\bibitem{ref13} I. Moerdijk, G.E. Reyes, \emph{Models for Smooth Infinitesimal Analysis}, Springer, Berlin, 1991.


\end{thebibliography}
\end{document}